\newtheorem{neu}{}[section]
\newtheorem*{Cor*}{Corollary}
\newtheorem{Thm}[neu]{Theorem}
\newtheorem*{Thm*}{Theorem}
\newtheorem*{Observation*}{Observation}
\newtheorem{Prop}[neu]{Proposition}
\newtheorem*{Prop*}{Proposition}
\theoremstyle{definition}
\newtheorem{Lemma}[neu]{Lemma}
\newtheorem*{Rmk*}{Remark}
\newtheorem{Rmk}[neu]{Remark}
\newtheorem{Ex}[neu]{Example}
\newtheorem*{Ex*}{Example}
\newtheorem*{Qu*}{Question}
\newtheorem{Def}[neu]{Definition}
\newcommand{\N}{\mathbb{N}}
\newcommand{\Z}{\mathbb{Z}}
\newcommand{\R}{\mathbb{R}}
\newcommand{\C}{\mathbb{C}}
\newcommand{\pf}{\longrightarrow}
\newcommand{\hpf}{\hookrightarrow}
\renewcommand{\O}{\mathcal{O}}
\newcommand{\U}{\mathbb{U}}
\newcommand{\E}{\mathbb{E}}
\newcommand{\Crit}{\mathrm{Crit}}
\newcommand{\beq}{\begin{equation}}
\newcommand{\beqn}{\begin{equation}\nonumber}
\newcommand{\eeq}{\end{equation}}
\newcommand{\bea}{\begin{equation}\begin{aligned}}
\newcommand{\bean}{\begin{equation}\begin{aligned}\nonumber}
\newcommand{\eea}{\end{aligned}\end{equation}}
\numberwithin{equation}{section}
\definecolor{Urs}{rgb}{0,.7,0}
\definecolor{Peter}{rgb}{0,0,1}
\definecolor{red}{rgb}{1,0,0}
\newcommand{\He}{\mathscr{H}}
\renewcommand{\>}{\rangle}
\newcommand{\<}{\langle}
\begin{document}
\title{Exponential decay for sc-gradient flow lines}
\author{Peter Albers}
\author{Urs Frauenfelder}
\address{
    Peter Albers\\
    Mathematisches Institut\\
    Westf\"alische Wilhelms-Universit\"at M\"unster}
\email{peter.albers@wwu.de}
\address{
    Urs Frauenfelder\\
    Department of Mathematics and Research Institute of Mathematics\\
    Seoul National University}
\email{frauenf@snu.ac.kr}
\keywords{}
\thanks{This material is supported by the SFB 878 Ð Groups, Geometry and Actions (PA) and by the Alexander von Humboldt Foundation (UF)}
\begin{abstract}
In this paper we introduce the notion of sc-action functionals and their sc-gradient flow lines. Our approach is inspired by Floer's unregularized gradient flow. The main result of this paper is that under a Morse condition sc-gradient flow lines have uniform exponential decay towards critical points. The ultimate goal for the future is to construct a M-polyfold bundle over a M-polyfold such that the space of broken sc-gradient flow lines is the zero set of a appropriate sc-section. Here uniform exponential decay is essential. 

Of independent interest is that we derive exponential decay estimates using interpolation inequalities as opposed to Sobolev inequalities. An advantage is that interpolation inequalities are independent of the dimension of the source space.
\end{abstract}
\maketitle
\tableofcontents

\section{Introduction}

In Morse homology one considers a chain complex generated by the critical points of a Morse function and with a boundary operator defined by counting isolated gradient flow lines between critical points. In certain favorable circumstances this recipe even works on infinite dimensional Hilbert manifolds, e.g.~counting closed geodesics on a Riemannian manifold where the Hilbert manifold in question is the loop space of the manifold. A generalization of this situation are Hamiltonian dynamical systems which admit a Lagrangian formulation, see for instance \cite{Abbo_Majer_A_Morse_complex_for_infinite_dimensional_mfds_I} for a modern treatment. Sometimes Hilbert space methods work also in the Hamiltonian formulation, see for instance the celebrated proof by Conley-Zehnder \cite{Conley_Zehnder_The_Birkhoff_Lewis_fixed_point_theorem_and_a_conjecture_of_Arnolf} of the Arnold conjecture on the torus. So far generalizing the beautiful arguments by Conley-Zehnder to general symplectic manifolds failed. Instead, the breakthrough came with Floer's idea of using the unregularized gradient flow equation, see \cite{Floer_The_unregularized_gradient_flow_of_the_symplectic_action}.

Recently Hofer-Wysocki-Zehnder \cite{HWZ_A_General_Fredholm_Theory_I,HWZ_A_General_Fredholm_Theory_II,HWZ_A_General_Fredholm_Theory_III} discovered a new notion of smoothness in infinite dimensions called scale-smoothness which still features the chain rule and tangent functors. Thus, it gives rise to new manifold structures in infinite dimensions such as sc-manifolds and more generally M-polyfolds. In this paper we introduce the notion of sc-action functionals and their sc-gradient flow lines. Our approach is inspired by Floer's unregularized gradient flow. The main result of this paper is that under a Morse condition sc-gradient flow lines have uniform exponential decay towards critical points. The ultimate goal for the future is to construct a M-polyfold bundle over a M-polyfold such that the space of broken sc-gradient flow lines is the zero set of a appropriate sc-section. Here uniform exponential decay is essential. The main result is Theorem  \ref{thm:exponential_decay_full_version} below.

Folk knowledge asserts that near Morse critical points gradient flow lines decay exponentially fast. One of the purposes of this article is to present a framework in which the previous statement is true. For this the sc-language by Hofer-Wysocki-Zehnder \cite{HWZ_A_General_Fredholm_Theory_I,HWZ_A_General_Fredholm_Theory_II,HWZ_A_General_Fredholm_Theory_III} is essential, indeed it enables us to define a good notion of a Morse critical point. Moreover, sc-spaces appeared (in different form) in interpolation theory, see e.g.~\cite{Bergh_Lofstrom_Interpolation_spaces,Triebel_Interpolation_theory_function_spaces_differential_operators}. We make use of interpolation inequalities to prove exponential decay. In particular, the proof does not use special features of the underlying gradient flow equation. Thus, this approach generalizes the statement but sharply contrast the method of proof in previous works on exponential decay in Morse and Floer theory such as \cite{Schwarz_Morse_homology,Salamon_lectures_on_floer_homology,Robbin_Salamon_Asymptotic_behaviour_of_holomorphic_strips,Ziltener_The_invariant_symplectic_action_and_decay_for_vorticies}

\section{Sc-action functionals and sc-gradient flow lines}

In this paper we work with sc-spaces as introduced by Hofer-Wysocki-Zehnder in \cite{HWZ_A_General_Fredholm_Theory_I}. Let 
\beq
\E=(E_k)_{k\in\N}
\eeq
be a sc-space where  each $E_k$ is a Hilbert space. We denote by $\<\cdot,\cdot\>$ the Hilbert-space inner product on $E_0$ and by $||\cdot||_k$ the norm on $E_k$.

\begin{Rmk} 
$\E$ is called a \textit{sc-Hilbert space}, see \cite[Theorem 3.10]{HWZ_A_General_Fredholm_Theory_I}.
\end{Rmk}
We abbreviate
\beq
\E^m:=(E_{k+m})_{k\in\N}\;.
\eeq
Next we define the notion of a \textit{sc-action funtionals} which is inspired by action functionals treated in Floer theory. For this let $U\subset E_0$ be an open set. $U$ induces a sc-Hilbert structure $\U:=(U_k)_{k\in\N}$ with
\beq
U_k:=U\cap E_k\;.
\eeq

In \cite{HWZ_A_General_Fredholm_Theory_I} Hofer-Wysocki-Zehnder introduce the notion of sc-smoothness on sc-spaces. part of the definition of sc-space is the compactness and density of the inclusion $E_{k+1}\hpf E_k$ which implies the chain rule for sc-smooth maps, see \cite[Theorem 2.16]{HWZ_A_General_Fredholm_Theory_I}. This is absolutely crucial for what follows.

\begin{Def}
A sc-smooth map
\beq
A:\U^1\to\R
\eeq 
is a \textit{sc-action functional} if it admits a \textit{gradient}. That is, a sc-smooth vector field $\nabla A:\U^1\to\E$ satisfying
\beq
\<\nabla A(x),\xi\>=DA(x)\xi\quad\forall x\in U_2,\xi\in E_1\;.
\eeq 
In particular,
\beq\label{eqn:defining_eqn_gradient}
\nabla A (U_{k+1})\subset E_k\;.
\eeq
The set of critical points of $A$ is 
\beq
\Crit A:=\{x\in U_\infty\mid DA(x)=0\}=\{x\in U_\infty\mid \nabla A(x)=0\}\subset U_\infty\;.
\eeq
\end{Def}

\begin{Rmk}
Because of the denseness of the embedding $U_2\hpf U_1$ the gradient, if it exists, is uniquely defined. On the other hand not every sc-smooth function admits a gradient, e.g.~the map $x\mapsto \<x,y\>$ where $y\in E_0\setminus E_1$ is fixed, is sc-smooth but does not admit a gradient. In fact, it is even $C^\infty$ in the classical sense when restricted to any level $E_k$.

By definition the critical points are smooth points. However, there might be non-smooth $x\in U\setminus U_\infty$ with $DA(x)=0$, see Section \ref{sec:example}. In Floer theory the critical point equation is elliptic and therefore critical points are automatically smooth. 
\end{Rmk}

\begin{Def}
An sc-action functional $A$ is called \textit{Morse} if the \textit{Hessian} 
\beq
\He_A=D\nabla A:U_{k+2}\times E_{k+1}\to E_k\;,
\eeq 
is for $k\in\N$ an isomorphism at critical points $x\in\Crit A$, i.e.
\beq
\nabla A(x)=0 \quad\Longrightarrow\quad \He_A(x):E_{k+1}\stackrel{\cong}{\pf}E_k\quad\forall k\geq0\;.
\eeq
\end{Def}

\begin{Rmk}\label{rmk:automatically_fractal}
The existence of a Morse critical point immediately implies that the sc-Hilbert space $\E$ is fractal, see \cite{Frauenfelder_First_steps_in_the_geography_of_scale_Hilbert_structures} and \cite{Frauenfelder_Fractal_scale_Hilbert_spaces_and_scale_Hessian_operators}, i.e.~sc-isomorphic to 
\beq
(\ell^2\supset\ell^2_f\subset\ell^2_{f^2}\supset\cdots)
\eeq
for some unbounded monotone increasing function $f:\N\to\R_{>0}$. Here $\ell_f^2\equiv\ell^2_f(\N)$ is the space
\beq
\Big\{(a_n)\in\ell^2(\N)\mid \sum_{n=1}^\infty f(n)a_n^2<\infty\Big\}\;.
\eeq
We point out that there exists a common orthogonal basis $(e_\nu)_\nu\in E_\infty$ for all $E_n$, e.g.~the image of the standard basis of $\ell^2$.

A concrete example of a sc-action functional to keep in mind is 
\beq
E_k:=\ell^2_{f^k}(\Z):=\Big\{(a_n)\in\ell^2(\Z)\mid \sum_{n=-\infty}^\infty (f(n))^ka_n^2<\infty\Big\}
\eeq
with $f(n):=n^2+1$. Then $\displaystyle A(a_n):=\sum_{n=-\infty}^\infty na_n^2$ is a sc-action functional. Indeed the gradient 
\beq
\nabla A(a_n)=\sum_{n=-\infty}^\infty 2na_ne_n
\eeq
is a sc-vector field. Moreover,
\beq
\Crit A=\{(a_n)\mid a_n=0\text{ if }n\neq0\}\;.
\eeq
If we identify $H^1(S^1,\C)\equiv\ell^2_f(\Z)$ via Fourier series the sc-action functional $A$ corresponds to the symplectic area functional. It is important to note that the inner product is defined on $E_0$ whereas the sc-action functional make sense only on $E_1$.
\end{Rmk}

\begin{Lemma}
Let $A$ be a Morse sc-action functional. Then critical points of $A$ are isolated in $E_{10}$, in particular also in $E_\infty$.
\end{Lemma}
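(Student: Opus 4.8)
The plan is to show that $x_0$ is the only critical point in a sufficiently small $E_{10}$-ball around it, and then to observe that isolation in the finer $E_\infty$-topology is automatic. Since every critical point lies in $U_\infty$, I would compare $x_0$ with a putative nearby critical point $x=x_0+h$, $h\in E_\infty$, and exploit the critical point equation $\nabla A(x)=0=\nabla A(x_0)$. Expanding $\nabla A$ around $x_0$ by the sc-smooth analogue of Taylor's formula gives $0=\He_A(x_0)h+\rho(h)$, where $\rho$ is a remainder that sc-smoothness controls as higher order in $h$. The Morse condition enters through the lower bound $\|\He_A(x_0)h\|_k\geq c_k\|h\|_{k+1}$ coming from the isomorphism $\He_A(x_0)\colon E_{k+1}\stackrel{\cong}{\to}E_k$, so the equation forces $\|h\|_{k+1}\leq c_k^{-1}\|\rho(h)\|_k$; if the remainder is genuinely negligible this yields $h=0$.

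The delicate point --- and the step I expect to be the main obstacle --- is the loss of one derivative inherent to sc-calculus. The isomorphism $\He_A(x_0)$ gains exactly one scale level ($E_{k+1}\to E_k$), whereas the sc-Taylor remainder $\rho(h)$ in $E_k$ is naturally estimated only in terms of the increment measured one level higher, i.e.~by $\|h\|_{k+2}$. This is already visible in the defining levels $U_{k+2}\times E_{k+1}\to E_k$ of $\He_A$, where the base point sits two levels above the value. A naive estimate therefore produces an off-by-one mismatch between $\|h\|_{k+1}$ and $\|h\|_{k+2}$ that does not close by itself, and a naive interpolation between neighbouring levels does not remove it, since high-frequency increments satisfy all such inequalities. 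What repairs this is the compactness of the inclusions $E_{k+1}\hpf E_k$: it is precisely the feature that makes the scale behave, at a Morse point, like a single Hilbert space for the purposes of inversion.

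Concretely, I would close the argument by the sc-inverse function theorem of Hofer--Wysocki--Zehnder. The Morse condition says exactly that the linearisation $\He_A(x_0)\colon\E^1\to\E$ of the sc-smooth map $\nabla A\colon\U^1\to\E$ is an sc-isomorphism; hence $\nabla A$ is a local sc-diffeomorphism near $x_0$, and in particular a homeomorphism from a scale neighbourhood of $x_0$ onto a scale neighbourhood of $0$ at each level. On the corresponding $E_{10}$-neighbourhood this map is therefore injective, so $x_0=(\nabla A)^{-1}(0)$ is its only zero there, which is the asserted isolation in $E_{10}$; the fixed level $E_{10}$ leaves ample room for the finitely many levels consumed by the inversion. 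Finally, since the $E_\infty$-topology is finer than the $E_{10}$-topology, an isolating $E_{10}$-ball is in particular an isolating $E_\infty$-neighbourhood, so critical points are isolated in $E_\infty$ as well.
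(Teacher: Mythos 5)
You correctly isolate the crux of the problem --- the one-level mismatch between what the Morse isomorphism $\He_A(x_0)\colon E_{k+1}\to E_k$ controls and the level at which the Taylor remainder of $\nabla A$ is naturally estimated --- but the way you close the gap does not work. There is no inverse function theorem for a general sc-smooth map whose linearization is an sc-isomorphism. Sc-smoothness of $\nabla A\colon\U^1\to\E$ only yields that $\nabla A\colon U_{1+m+k}\to E_m$ is $C^k$, so on no fixed level are the hypotheses of the classical inverse function theorem met: as a $C^1$ map $U_{12}\to E_{10}$ the derivative at $x_0$ is $\He_A(x_0)\colon E_{12}\to E_{10}$, which factors through the compact inclusion $E_{11}\hpf E_{10}$ and is therefore not surjective, while $\nabla A\colon U_{11}\to E_{10}$ is merely continuous. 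The implicit/inverse function theorems of Hofer--Wysocki--Zehnder apply to sc-Fredholm \emph{germs}, i.e.\ maps admitting a contraction-germ normal form after subtracting an $sc^+$-part; that property is strictly stronger than, and not implied by, the Morse hypothesis of this paper. Compactness of the inclusions $E_{k+1}\hpf E_k$ does not by itself restore invertibility, so the final step of your argument invokes a theorem that is not available.

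The repair is precisely the interpolation inequality you set aside. The point is that the remainder $\rho(h)$ in $0=\He_A(x_0)h+\rho(h)$ is \emph{quadratic}, not linear, in $h$, so its excess power can be traded for smallness at a high level. Since a Morse critical point forces $\E$ to be fractal (Remark \ref{rmk:automatically_fractal}), Proposition \ref{prop:interpolation_ineq} with $i=1$, $j=3$, $k=5$ gives $||h||_3^2\leq ||h||_5\,||h||_1$; combined with $||\rho(h)||_0\leq C||h||_3^2$ (from $\nabla A\colon U_3\to E_0$ being $C^2$) and $||\He_A(x_0)h||_0\geq c\,||h||_1$, one gets $c\,||h||_1\leq C||h||_5\,||h||_1$, which forces $h=0$ once $||h||_5<c/C$. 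The paper runs the same mechanism one level up, on the energy $e(x)=||\nabla A(x)||_0^2$ rather than on $\nabla A$ itself: a $C^3$ Taylor expansion gives $e(x)\geq C(||x||_1^2-||x||_4^3)$, and the interpolation \eqref{eqn:interpolation} turns this into $e(x)\geq C(1-C'||x||_{10})||x||_1^2>0$ for $0<||x||_{10}$ small. That choice is deliberate, as the resulting lower bound \eqref{eqn:lower_estimate_on_e} is reused in the proof of the action-energy inequality, Proposition \ref{prop:action_energy_ineq}. Your concluding observation that an isolating $E_{10}$-ball is a fortiori isolating in the finer $E_\infty$-topology is correct.
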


\begin{proof}
We give here a nonstandard proof. This  will be useful in the proof of the action-energy inequality in Proposition \ref{prop:action_energy_ineq}. We first define the energy functional $e:\U^1\to\R$ associated with the sc-action functional $A$
\beq\label{eqn:definition_of_energy}
e(x):=\<\nabla A(x),\nabla A(x)\>
\eeq
which is sc-smooth since the $A$ is sc-smooth and the map $(\xi,\eta)\mapsto\<\xi,\eta\>$ is sc-smooth. We observe that 
\beq
e(x)=0 \quad\text{if and only if}\quad DA(x)=0\;.
\eeq
For simplicity we assume that the critical point is $0\in\Crit A$, then
\beq
D^2e(0)[\xi,\eta]=2\<\He_A(0)\xi,\He_A(0)\eta\>\;.
\eeq
It follows from \cite[Proposition 2.14]{HWZ_A_General_Fredholm_Theory_I} that
\beq
e:U_4=(\U^1)_3\to\R
\eeq
is a $C^3$-function. Thus by Taylor's formula we can write
\bea
e(x)&=\tfrac12D^2e(0)(x,x)+\O(||x||^3_4)\\
&=||\He_A(0)x||_0^2+\O(||x||^3_4)\\
&\geq C(||x||_1^2-||x||_4^3)
\eea
where we used that $\He_A(0)$ is an isomorphism. In view of the interpolation inequality \eqref{eqn:interpolation} we continue
\beq\label{eqn:lower_estimate_on_e}
e(x)\geq C(||x||_1^2-C'||x||_{10}||x||_{1}^2)=C(1-C'||x||_{10})||x||_{1}^2\;.
\eeq
Thus, if $||x||_{10}$ is sufficiently small then $e(x)>0$ and thus $x\not\in\Crit A$. In particular, $0\in\Crit A$ is isolated. This proves the Lemma.
\end{proof}

\begin{Prop}[Action-Energy Inequality]\label{prop:action_energy_ineq}
We assume that $x^+\in\Crit A$ with $A(x^+)=a^+$. Then there exists $\epsilon=\epsilon(x^+)>0$ and $\kappa=\kappa(x^+)>0$ such that
\beq
|A(x)-a^+|\leq \frac{e(x)}{\kappa}\qquad\forall ||x-x^+||_{10}<\epsilon\;.
\eeq 
\end{Prop}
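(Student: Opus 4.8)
The plan is to reduce the action--energy inequality to a Taylor expansion of $A$ around the critical point $x^+$, exactly mirroring the structure of the proof of the preceding Lemma, where the same interpolation inequality \eqref{eqn:interpolation} was used to control a cubic remainder by the quadratic leading term. Without loss of generality I assume $x^+=0$ and $a^+=A(0)$. Since $0$ is a critical point, $DA(0)=0$, so the first-order term in the Taylor expansion of $A$ vanishes. Thus I expect to write, for $x$ near $0$ in the level $E_{10}$,
\bea
A(x)-a^+&=\tfrac12 D^2A(0)[x,x]+\O(\|x\|_4^3)\\
&=\tfrac12\<\He_A(0)x,x\>+\O(\|x\|_4^3)\;,
\eea
using that the second derivative $D^2A(0)$ is the Hessian $\He_A(0)$ paired against the $E_0$-inner product, which is the defining property of the gradient.

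Next I would estimate the leading quadratic term $\tfrac12\<\He_A(0)x,x\>$ from above. By Cauchy--Schwarz in $E_0$ we have $|\<\He_A(0)x,x\>|\leq\|\He_A(0)x\|_0\,\|x\|_0$, and since $\He_A(0):E_1\to E_0$ is an isomorphism (the Morse condition), $\|\He_A(0)x\|_0$ is comparable to $\|x\|_1$; in fact $\|x\|_0\leq\|x\|_1\leq C\|\He_A(0)x\|_0$, so the quadratic term is bounded above by a constant times $\|\He_A(0)x\|_0^2=e(0)$-type quantity. More precisely, I want to compare the Taylor expansion of $A$ with the already-established lower bound on $e$ from \eqref{eqn:lower_estimate_on_e}, namely $e(x)\geq C(1-C'\|x\|_{10})\|x\|_1^2$. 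The goal is to show that $|A(x)-a^+|$ is bounded above by a constant multiple of $\|x\|_1^2$ and hence, for $\|x\|_{10}$ small enough that the factor $(1-C'\|x\|_{10})$ is bounded below by $\tfrac12$, by a constant multiple of $e(x)$.

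Concretely I would argue as follows. The quadratic term gives $\tfrac12|\<\He_A(0)x,x\>|\leq C_1\|x\|_1^2$ by the isomorphism property and Cauchy--Schwarz. The cubic remainder $\O(\|x\|_4^3)$ is controlled by the interpolation inequality \eqref{eqn:interpolation}, which allows one to absorb it into $\|x\|_{10}\|x\|_1^2$, exactly as in the Lemma. Combining, for $\|x\|_{10}<\epsilon$ one obtains $|A(x)-a^+|\leq C_2\|x\|_1^2$. On the other hand \eqref{eqn:lower_estimate_on_e} gives $e(x)\geq \tfrac{C}{2}\|x\|_1^2$ once $\epsilon$ is chosen small enough that $C'\|x\|_{10}<\tfrac12$. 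Setting $\kappa:=\tfrac{C}{2C_2}$ then yields $|A(x)-a^+|\leq e(x)/\kappa$, which is the desired inequality. Both $\epsilon$ and $\kappa$ depend on the constants $C,C'$ and on the operator norms of $\He_A(0)^{\pm1}$, hence on $x^+$, consistent with the statement.

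The step I expect to be the main obstacle is the clean bookkeeping of the two smallness thresholds: one needs a single $\epsilon$ that simultaneously makes the Taylor remainder small (so the $C^3$-expansion of $A$ on the level $E_4=(\U^1)_3$ is valid and the cubic term is genuinely lower order) and makes the lower bound \eqref{eqn:lower_estimate_on_e} on $e$ nondegenerate. The comparison is delicate because the upper estimate on $A-a^+$ and the lower estimate on $e$ are naturally phrased in terms of different level norms ($\|x\|_4$ for the remainder versus $\|x\|_1$ for the quadratic term), and the whole point of using interpolation rather than Sobolev inequalities is to bridge these via \eqref{eqn:interpolation} without reference to the source dimension. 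Getting the chain of constants to line up so that the final $\kappa$ is genuinely positive and uniform on the $\epsilon$-ball in $E_{10}$ is where the care is required.
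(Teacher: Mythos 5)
Your proposal is correct and follows essentially the same route as the paper's proof: Taylor-expand $A$ on the level $U_4$ (where $A$ is $C^3$ by \cite[Proposition 2.14]{HWZ_A_General_Fredholm_Theory_I}), bound the quadratic term by $C\|x\|_1^2$ using boundedness of $\He_A(0)$, absorb the cubic remainder via the interpolation inequality \eqref{eqn:interpolation}, and divide by the lower bound \eqref{eqn:lower_estimate_on_e} on $e(x)$ after shrinking $\epsilon$. The constant-bookkeeping you flag as the main obstacle is handled in the paper exactly as you describe, by taking $\|x\|_{10}$ small enough that both the factor $(1+C'\|x\|_{10})$ above and $(1-C_4\|x\|_{10})$ below stay bounded away from $0$ and $\infty$.
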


\begin{Rmk}
If $A$ is the symplectic area functional the action-energy inequality is a consequence of the isoperimetric inequality. We use the action-energy inequality to prove exponential decay for sc-gradient flow lines. This approach has been used before in \cite{Gaio_Salamon_GW_invariants_of_symplectic_quotients_and_adiabatic_limits} and \cite{Ziltener_The_invariant_symplectic_action_and_decay_for_vorticies}.
\end{Rmk}

\begin{proof}
Without loss of generality we assume that $x^+=0\in\Crit A$ with $A(0)=0$. \cite[Proposition 2.14]{HWZ_A_General_Fredholm_Theory_I} implies that $A:U_4\to\R$ is $C^3$ and thus
\beq
|A(x)|\leq|\<\He_A(0)x,x\>| + C_1||x||_4^3
\eeq
for sufficiently small $x$. Since $\He_A(0)$ is bounded we can estimate further using \eqref{eqn:interpolation}
\beq
|A(x)|\leq C_2(1+||x||_{10})||x||_1^2
\eeq
and employing inequality \eqref{eqn:lower_estimate_on_e} 
\beq
|A(x)|\leq \frac{C_2(1+||x||_{10})}{C_3(1-C_4||x||_{10})}e(x)\;.
\eeq
Thus, for $||x||_{10}$ sufficiently small the Proposition follows.
\end{proof}

\begin{Def}
A (negative) \textit{sc-gradient flow line} of $A$ is a sc-smooth map $x:\R\to\U$ satisfying
\beq
x'(s)=-\nabla A\big(x(s)\big)
\eeq 
where
\beq
x'(s):=Dx(s)1\;.
\eeq
\end{Def}

\begin{Rmk}
Since $\R$ carries only the constant scale structure the requirement of being smooth for a sc-gradient flow line $x:\R\to\U$ implies that $x(\R)\subset U_\infty$.
\end{Rmk}

\section{Exponential decay for sc-gradient flow lines} 
 
In this section we assume that $A:\U^1\to\R$ is a Morse sc-action functional. Moreover, let $x:\R\to\E$ be a sc-gradient flow line such that
\beq
\lim_{s\to+\infty}x(s)=x^+\in\Crit A
\eeq 
where convergence is with respect to the $E_\infty$-topology. The main result of this paper is that $E_\infty$-convergence together with the Morse property imply uniform exponential convergence. In the following we abbreviate
\beq
x^{(m)}:=\frac{d^mx}{ds^m}\qquad \forall m\in\N\;.
\eeq 
for a sc-gradient flow line $x:\R\to\E$.
  
\begin{Thm}[Uniform exponential decay]\label{thm:exponential_decay_full_version}
Let $\kappa$ be the constant from Proposition \ref{prop:action_energy_ineq} and $0<\kappa'<\frac{\kappa}{3}$. Then there constants $C_{j,m}=C_{j,m}(\kappa')>0$, $j,m\in\N$, such that
\beq\label{eqn:exponential_decay_for_x}
||x(s)-x^+||_j\leq C_{j,0}e^{-\kappa' s}\quad\forall s\geq 0
\eeq
and
\beq\label{eqn:exponential_decay_for_derivatives_of_x}
||x^{(m)}(s)||_j\leq C_{j,m}e^{-\kappa's}\quad\forall s\geq 0,\;m\geq1\;.
\eeq
\end{Thm}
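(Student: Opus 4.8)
The plan is to establish exponential decay of the energy along the flow line first, then bootstrap to the $E_j$-norms of $x(s)-x^+$, and finally iterate to control all derivatives $x^{(m)}(s)$. Throughout I assume $x^+=0$ and $A(0)=0$. The central quantity is the action $a(s):=A(x(s))$, which is monotone and decreasing: along a negative gradient flow line we compute $a'(s)=DA(x(s))x'(s)=\langle\nabla A(x(s)),-\nabla A(x(s))\rangle=-e(x(s))$. Since $x(s)\to 0$ in $E_\infty$ we have $a(s)\to 0$, so $a(s)=\int_s^\infty e(x(\sigma))\,d\sigma\geq 0$.

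**Step 1 (energy decays exponentially).** Using the action-energy inequality from Proposition \ref{prop:action_energy_ineq}, once $s$ is large enough that $\|x(s)\|_{10}<\epsilon$, we have $a(s)\leq e(x(s))/\kappa=-a'(s)/\kappa$, hence $a'(s)\leq-\kappa\,a(s)$. Grönwall then gives $a(s)\leq a(s_0)e^{-\kappa(s-s_0)}$. Observe that $\kappa'<\kappa/3<\kappa$, so in particular $a(s)\leq C e^{-3\kappa' s}$. Next I want to convert this into decay of the energy $e(x(s))$ itself, not just its integral. The clean way is to note $\frac{d}{ds}e(x(s))=2\langle\He_A(x(s))x'(s),\nabla A(x(s))\rangle$ where I differentiate $e(x)=\langle\nabla A(x),\nabla A(x)\rangle$; since $x'=-\nabla A$ this is $-2\langle\He_A(x)\nabla A,\nabla A\rangle$, bounded in absolute value by $C e(x)$ for $x$ near $0$. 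So $e$ also satisfies a differential inequality and one derives $e(x(s))\leq C e^{-2\kappa' s}$ (the exponent improves because $e\leq -a'$ and a standard integral-to-pointwise argument, or directly from $\int_s^\infty e\,d\sigma\leq C e^{-3\kappa' s}$ together with near-monotonicity of $e$).

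**Step 2 (from energy to the $E_j$-norms of $x-x^+$).** By \eqref{eqn:lower_estimate_on_e} we have $e(x)\geq C(1-C'\|x\|_{10})\|x\|_1^2$, so for $s$ large $\|x(s)\|_1^2\leq C\,e(x(s))\leq Ce^{-2\kappa' s}$, giving \eqref{eqn:exponential_decay_for_x} for $j=1$ with rate $\kappa'$. To reach higher $j$ I use the flow equation $x'(s)=-\nabla A(x(s))$ together with the gradient's smoothing property \eqref{eqn:defining_eqn_gradient}, namely $\nabla A(U_{k+1})\subset E_k$. The idea is a bootstrap: I want to bound $\|x(s)\|_{j}$ in terms of $\|x'(s)\|_{j-1}=\|\nabla A(x(s))\|_{j-1}$, which in turn is controlled by $\|x(s)\|_j$ through the Hessian; since $\He_A(0):E_{j}\to E_{j-1}$ is an isomorphism (the Morse condition), inverting it and using the already-established lower-level decay lets me close the loop. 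Concretely, writing $\nabla A(x)=\He_A(0)x+R(x)$ with a remainder $R$ that is higher order in lower norms, the isomorphism property yields $\|x\|_j\leq C(\|\nabla A(x)\|_{j-1}+\text{lower-order terms})=C(\|x'\|_{j-1}+\cdots)$, and one controls $\|x'\|_{j-1}$ either by interpolation with $\|x''\|$ or directly by re-feeding the flow equation.

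**Step 3 (derivatives via differentiating the flow equation).** Differentiating $x'=-\nabla A(x)$ gives $x''=-\He_A(x)x'$, and inductively $x^{(m+1)}=-D^m(\nabla A\circ x)$ which is a universal polynomial expression in $\He_A(x),D^2\nabla A(x),\dots$ applied to lower derivatives $x',\dots,x^{(m)}$. Since each such derivative map is bounded on the relevant scales (sc-smoothness) and each factor $x^{(k)}$ for $k\leq m$ has already been shown to decay like $e^{-\kappa' s}$ in every $E_j$-norm by induction, the product decays like $e^{-\kappa' s}$ as well (the key point being that the rate does not degrade: one decaying factor suffices, and the bounded-operator factors and the remaining near-critical factors stay bounded). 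This yields \eqref{eqn:exponential_decay_for_derivatives_of_x}.

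\medskip

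\emph{The main obstacle} will be Step 2: making the scale-bootstrap rigorous. The difficulty is that the Taylor expansion of $\nabla A$ and the action-energy estimate are only valid in a fixed low norm ($\|\cdot\|_{10}$), whereas I need uniform exponential decay in \emph{every} $E_j$ with a rate $\kappa'$ that is independent of $j$. The delicate interplay is between the smoothing shift in \eqref{eqn:defining_eqn_gradient} (which costs one scale level each time $\nabla A$ is applied, pushing toward higher regularity) and the loss of scale levels in the interpolation inequality \eqref{eqn:interpolation} and in controlling remainder terms. I expect the crucial technical input to be precisely the interpolation inequalities emphasized in the introduction: interpolating $\|x\|_j$ between a low norm (where decay is already known) and a high norm (where only boundedness is available) transfers the exponential rate to all intermediate and eventually all levels without the rate depending on $j$, and, as the authors stress, without any dimension-dependent Sobolev constants entering. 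Arranging the induction so that the constants $C_{j,m}$ remain finite at each finite stage, and so that the single uniform rate $\kappa'$ survives every bootstrap step, is where the real work lies.
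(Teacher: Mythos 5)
Your overall architecture (action--energy inequality $\Rightarrow$ exponential decay of the action $\Rightarrow$ decay of $x$ $\Rightarrow$ differentiate the flow equation for the derivatives) matches the paper's, and your Step 3 is essentially the paper's induction. But there are two genuine gaps in Steps 1 and 2. First, your route to \emph{pointwise} decay of the energy does not work: you claim $\bigl|\tfrac{d}{ds}e(x(s))\bigr|=2\,\bigl|\langle\He_A(x)\nabla A(x),\nabla A(x)\rangle\bigr|\le C\,e(x(s))$, but this would require the Hessian to be bounded as an operator $E_0\to E_0$, whereas the sc-structure only gives boundedness $E_{k+1}\to E_k$; in the model of Remark \ref{rmk:automatically_fractal} the Hessian $\hat a\mapsto\sum 2n\hat a_ne_n$ is unbounded on $\ell^2$, and ``near-monotonicity of $e$'' also fails since the Hessian at a Morse critical point need not be definite. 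The paper's substitute is Lemma \ref{lem:integral_bound_implies_pointwise_bound}: a uniform bound on $\|x''\|_0$ converts the integral bound $\int_T^\infty\|x'\|_0^2\,ds\le Ce^{-\kappa T}$ into $\|x'(T)\|_0\le C'e^{-\kappa T/3}$, at the cost of a cube root --- which is exactly why the theorem is stated with $\kappa'<\kappa/3$ rather than the $\kappa/2$ your (non-working) argument would suggest; your proposal never accounts for the $3$.

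Second, Step 2 as written is not a proof and, as it stands, has no entry point: your only route to decay of $\|x(s)-x^+\|_1$ is through \eqref{eqn:lower_estimate_on_e}, which needs the pointwise energy decay that Step 1 fails to deliver. The paper instead gets $E_0$-decay directly from the action decay via the length estimate $\|x^+-x(s)\|_0\le\tfrac{2}{\sqrt\kappa}\sqrt{A(x(s))-a^+}$ (integrating $\|x'\|_0=\|\nabla A(x)\|_0=e(x)/\sqrt{e(x)}$ along the flow line and using the action--energy inequality), and then passes to all levels $j$ not by inverting $\He_A(0)$ against a remainder (whose sc-Taylor estimates you do not supply and whose rate would a priori degrade with $j$), but by the interpolation inequality $\|x(s)\|_j\le C\,\|x(s)\|_L^{j/L}\|x(s)\|_0^{(L-j)/L}$ with $L$ arbitrarily large, using only that $\|x(s)\|_L\to0$; this is what makes the rate $\kappa'$ uniform in $j$. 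You correctly name interpolation as the crucial input in your closing paragraph, but it must replace, not supplement, the Hessian bootstrap of your Step 2.
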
 
 
As a first step we prove the following Proposition.
 
\begin{Prop}\label{prop:exponential_decay}
Let $\kappa$ be the constant from Proposition \ref{prop:action_energy_ineq} and $0<\kappa'<\frac{\kappa}{2}$. Then there constants $C_j=C_j(\kappa')>0$, $j\in\N$, such that
\beq
||x(s)-x^+||_j\leq C_je^{-\kappa's}\quad\forall s\geq 0
\eeq
Furthermore, there exists a constant $C>0$
\beq
A(x(s))-a^+\leq Ce^{-\kappa s}\qquad\forall s\geq 0
\eeq
where $a^+=A(x^+)$.
\end{Prop}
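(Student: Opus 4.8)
The plan is to first prove the exponential decay of the action and then to bootstrap it, via the interpolation inequality \eqref{eqn:interpolation}, into decay of all the norms $||x(s)-x^+||_j$. Write $a(s):=A(x(s))-a^+$. Differentiating along the flow gives $a'(s)=DA(x(s))x'(s)=-\<\nabla A(x(s)),\nabla A(x(s))\>=-e(x(s))\leq0$, so $a$ is non-increasing; since $x(s)\to x^+$ in the $E_\infty$-topology and $A$ is continuous we have $a(s)\to0$, hence $a(s)\geq0$ throughout. By $E_\infty$-convergence choose $s_0$ with $||x(s)-x^+||_{10}<\epsilon$ for $s\geq s_0$; then Proposition \ref{prop:action_energy_ineq} gives $a(s)\leq e(x(s))/\kappa=-a'(s)/\kappa$, i.e. $a'+\kappa a\leq0$. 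Gr\"onwall's inequality yields $a(s)\leq a(s_0)e^{-\kappa(s-s_0)}$ for $s\geq s_0$, and enlarging the constant to cover the compact interval $[0,s_0]$ gives the asserted estimate $a(s)\leq Ce^{-\kappa s}$ for all $s\geq0$.

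To obtain the base case $j=0$ I would exploit the same inequality $e(x(s))\geq\kappa a(s)$ through the auxiliary function $\sqrt{a}$. On the set where $a>0$,
\[
\frac{d}{ds}\sqrt{a(s)}=\frac{-e(x(s))}{2\sqrt{a(s)}}\leq-\frac{\sqrt{\kappa}}{2}\sqrt{e(x(s))}=-\frac{\sqrt{\kappa}}{2}||x'(s)||_0 ,
\]
while if $a$ vanishes at some $s_1$ then $x\equiv x^+$ for $s\geq s_1$ and there is nothing to prove. Integrating over $[s,\infty)$ and using $\sqrt{a}\to0$ gives $\int_s^\infty||x'(\sigma)||_0\,d\sigma\leq\tfrac{2}{\sqrt{\kappa}}\sqrt{a(s)}$; since $x(\sigma)\to x^+$ in $E_0$ the fundamental theorem of calculus yields $x(s)-x^+=-\int_s^\infty x'(\sigma)\,d\sigma$ and hence
\[
||x(s)-x^+||_0\leq\int_s^\infty||x'(\sigma)||_0\,d\sigma\leq\frac{2}{\sqrt{\kappa}}\sqrt{a(s)}\leq\frac{2\sqrt{C}}{\sqrt{\kappa}}\,e^{-\kappa s/2}\leq C_0e^{-\kappa's},
\]
the last step using $\kappa'<\kappa/2$ and $s\geq0$.

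For the higher norms I would not iterate a differential inequality but instead interpolate. From $E_\infty$-convergence each $s\mapsto||x(s)-x^+||_N$ is continuous and tends to $0$, so $M_N:=\sup_{s\geq0}||x(s)-x^+||_N<\infty$. Fixing $j\geq1$ and choosing $N\geq j/(1-2\kappa'/\kappa)$, which is possible because $\kappa'<\kappa/2$, one has $\tfrac{\kappa}{2}(1-j/N)\geq\kappa'$, and the interpolation inequality \eqref{eqn:interpolation} between the levels $0$ and $N$ gives
\[
||x(s)-x^+||_j\leq C||x(s)-x^+||_0^{1-j/N}||x(s)-x^+||_N^{j/N}\leq C\big(C_0e^{-\kappa s/2}\big)^{1-j/N}M_N^{j/N}\leq C_je^{-\kappa's}.
\]

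The main obstacle is precisely this last step. The direct route, bounding $||x(s)-x^+||_j\leq\int_s^\infty||\nabla A(x(\sigma))||_j\,d\sigma$, fails to close because near the critical point $||\nabla A(x)||_j$ is controlled through $\He_A(x^+)$ by the higher norm $||x-x^+||_{j+1}$, so one loses a derivative at every level. The device that circumvents this is to pair the sharp $E_0$-decay from the energy argument with the mere $E_N$-boundedness coming from $E_\infty$-convergence: interpolation sacrifices only the factor $1-j/N$ in the exponent, which is driven below the allowed loss by enlarging $N$. Since the interpolation inequality is independent of any underlying dimension, this is exactly the dimension-free mechanism advertised in the introduction.
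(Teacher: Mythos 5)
Your proof is correct and follows essentially the same route as the paper: the Gr\"onwall argument $a'\leq-\kappa a$ for the action, the bound $||x(s)-x^+||_0\leq\int_s^\infty||x'||_0\leq\tfrac{2}{\sqrt{\kappa}}\sqrt{a(s)}$ (which you package as a differential inequality for $\sqrt{a}$ where the paper writes the same computation as a chain of integral identities), and interpolation between levels $0$ and a large $N$ to trade the sharp $E_0$-decay against $E_N$-boundedness for the higher norms. The only cosmetic differences are the order of the steps and your citing \eqref{eqn:interpolation} where the general inequality \eqref{eqn:general_interpolation_inequality} is meant.
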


\begin{proof} 
As a first step we claim that 
\beq
||x(s)-x^+||_0\leq C_0 e^{-\frac{\kappa}{2} s}\qquad\forall s\geq 0
\eeq
where $\kappa$ is the constant from Proposition \ref{prop:action_energy_ineq}. For this we fix $s_0\in\R$ such that
\beq
||x(s)-x^+||_{10}<\epsilon^+\qquad\forall s\geq s_0\;,
\eeq
where $\epsilon^+$ is assumed to be smaller than $\epsilon(x^+)$ from Proposition \ref{prop:action_energy_ineq} and such that there is no other critical point of $A$ which has $||\cdot||_{10}$-distance to $x^+$ less than $\epsilon^+$. Next we prove that if there exists $s_1\in\R$ with $x(s_1)=x^+$ then $x(s)=x^+$ for all $s\geq s_1$ (and thus in this case the desired estimate follows trivially.) If the last assertion is wrong then we can find $s_2>s_1$ with $x(s_2)\neq x^+$. On the other hand the action $A$ strictly decreases along non-constant sc-gradient flow lines:
\bea
A(x(s_2))-A(x(s_1))&=\int_{s_1}^{s_2}\frac{d}{ds}A(x(s))ds\\
&=\int_{s_1}^{s_2}\<\nabla A(x(s)),x'(s)\>ds\\
&=-\int_{s_1}^{s_2}||x'(s)||^2_0ds\\
&<0\;.
\eea
In the same way we conclude
\beq
A(x(s))\leq A(x(s_2)) \qquad\forall s\geq s_2\;.
\eeq
In particular, we arrive at the contradiction
\beq
A(x^+)\leq A(x(s_2))<A(x(s_1))=A(x^+)\;.
\eeq
Therefore, from now on we assume without loss of generality that 
\beq
x(s)\neq x^+\quad\forall s\geq s_0\;.
\eeq
Since by assumption there are no other critical points in a $\epsilon^+$-neighborhood of $x^+$ (measured with the $||\cdot||_{10}$-norm) and since $x(\R)\subset E_\infty$ we conclude that
\beq
\nabla A(x(s))\neq0\qquad\forall s\geq s_0\;.
\eeq
Next we prove the following estimate in this situation:
\beq
||x^+-x(s)||_0\leq \frac{2}{\sqrt{\kappa}}\sqrt{A(x(s))-a^+}
\eeq
for all $s\geq s_0$. Indeed:
\bea
||x^+-x(s)||_0&\leq\int_s^\infty ||x'(t)||_0dt\\
&=\int_s^\infty ||\nabla A(x(t))||_0dt\\
&=\int_s^\infty \frac{||\nabla A(x(t))||_0^2}{||\nabla A(x(t))||_0}dt\\
&=\int_s^\infty \frac{||\nabla A(x(t))||_0^2}{\sqrt{e(x(t))}}dt\\
&\leq\frac{1}{\sqrt{\kappa}}\int_s^\infty \frac{||\nabla A(x(t))||_0^2}{\sqrt{A(x(t))-a^+}}dt\\
&=-\frac{1}{\sqrt{\kappa}}\int_s^\infty \frac{\<\nabla A(x(t)),x'(t)\>}{\sqrt{A(x(t))-a^+}}dt\\
&=-\frac{1}{\sqrt{\kappa}}\int_s^\infty \frac{DA(x(t))x'(t)}{\sqrt{A(x(t))-a^+}}dt\\
&=-\frac{1}{\sqrt{\kappa}}\int_s^\infty \frac{\frac{d}{dt}A(x(t))}{\sqrt{A(x(t))-a^+}}dt\\
&=-\frac{1}{\sqrt{\kappa}}\int_s^\infty \frac{\frac{d}{dt}\big(A(x(t))-a^+\big)}{\sqrt{A(x(t))-a^+}}dt\\
&=-\frac{2}{\sqrt{\kappa}}\int_s^\infty \frac{d}{dt}\sqrt{A(x(t))-a^+}dt\\
&=-\frac{2}{\sqrt{\kappa}} \Big(\sqrt{A(x^+)-a^+}-\sqrt{A(x(s))-a^+}\Big)\\
&=\frac{2}{\sqrt{\kappa}}\sqrt{A(x(s))-a^+}\\
\eea
according to the definition of the energy, see \eqref{eqn:definition_of_energy}, and Proposition \ref{prop:action_energy_ineq}. Next we show that $A(x(s))$ converges exponentially fast to $a^+$. For this observe
\bea
\frac{d}{ds}A(x(s))&=DA(x(s))x'(s)\\
&=\<\nabla A(x(s)),x'(s)\>\\
&=-\<\nabla A(x(s)),\nabla A(x(s))\>\\
&=-e(x(s))
\eea
and thus, since $|A(x(s))-a^+|=A(x(s))-a^+$, we get from Proposition \ref{prop:action_energy_ineq}
\bea
\frac{d}{ds}\Big(A(x(s))-a^+\Big)&=-e(x(s))\\
&\leq-\kappa (A(x(s))-a^+)\;.
\eea
This implies that there exists $C>0$
\beq
A(x(s))-a^+\leq Ce^{-\kappa s}\qquad\forall s\geq s_0
\eeq
as claimed in the statement. Combining these estimates we get
\beq
||x^+-x(s)||_0\leq\frac{2}{\sqrt{\kappa}}\sqrt{A(x(s))-a^+}\leq C_0e^{-\frac\kappa2 s}\qquad\forall s\geq 0
\eeq
for some $C_0>0$. It remains to establish the Theorem for the $||\cdot||_j$-norms. A special case of the interpolation inequality \eqref{eqn:general_interpolation_inequality} is
\beq\label{eqn:interpoltation_for_exponential_decay}
||x(s)||_j\leq C_{j,L} ||x(s)||_L^{\frac{j}{L}}||x(s)||_0^{\frac{L-j}{L}}\;.
\eeq
Given $0<\kappa'<\frac{\kappa}{2}$ choosing $L$ sufficiently large and using that $||x(s)||_L$ converges to $0$ (for any $L$) we obtain
\beq
||x(s)||_j\leq C_je^{-\kappa' s}\qquad\forall s\geq0\;.
\eeq
This proves the Proposition.
\end{proof}

To extend Proposition \ref{prop:exponential_decay} to higher derivatives of the sc-gradient flow line we need the following Lemma.

\begin{Lemma}\label{lem:integral_bound_implies_pointwise_bound}
If there exists $\kappa>0$, $T>0$ and $C>0$ such that
\beq
\int_T^{\infty}||x'(s)||^2_0ds\leq Ce^{-\kappa T}\;.
\eeq
and moreover if $||x''(s)||_0<\epsilon$ for all $s\geq T$ then
\beq
||x'(T)||^2_0\leq \sqrt[3]{8\epsilon C e^{-\kappa T}}\;.
\eeq
\end{Lemma}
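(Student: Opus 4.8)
The plan is to convert the $L^2$-in-$s$ hypothesis on $x'$ into a pointwise bound at $s=T$ by exploiting that the integrand $s\mapsto\|x'(s)\|_0^2$ cannot drop off instantaneously, since its rate of change is governed by the hypothesis $\|x''\|_0<\epsilon$. The key elementary fact I would isolate first is that the real-valued function $h(s):=\|x'(s)\|_0$ is $\epsilon$-Lipschitz on $[T,\infty)$. I would deduce this without differentiating the norm — which is not differentiable where $x'(s)=0$ — but instead from the reverse triangle inequality together with the fundamental theorem of calculus applied to $x'$ itself:
\[
\big|h(s)-h(T)\big|\ \le\ \|x'(s)-x'(T)\|_0\ \le\ \int_T^s\|x''(\tau)\|_0\,d\tau\ \le\ \epsilon\,(s-T)\,,\qquad s\ge T.
\]

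Next, abbreviating $a:=h(T)=\|x'(T)\|_0$, the Lipschitz estimate yields the lower bound $h(s)\ge a-\epsilon(s-T)$, so that $h(s)\ge a/2$ on the whole interval $[T,\,T+\tfrac{a}{2\epsilon}]$, which lies in the region $[T,\infty)$ where the hypotheses hold. Bounding the integrand $h^2$ below by $a^2/4$ on this interval of length $a/(2\epsilon)$ then produces a lower bound for the integral purely in terms of $a$:
\[
\int_T^\infty\|x'(s)\|_0^2\,ds\ \ge\ \int_{T}^{T+\frac{a}{2\epsilon}}h(s)^2\,ds\ \ge\ \frac{a^2}{4}\cdot\frac{a}{2\epsilon}\ =\ \frac{a^3}{8\epsilon}\,.
\]
Comparing this with the assumed upper bound $\int_T^\infty\|x'(s)\|_0^2\,ds\le Ce^{-\kappa T}$ gives $a^3\le 8\epsilon Ce^{-\kappa T}$, that is, $\|x'(T)\|_0\le\sqrt[3]{8\epsilon Ce^{-\kappa T}}$.

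Finally, I would observe that in the regime in which the Lemma is applied the right-hand side $\sqrt[3]{8\epsilon Ce^{-\kappa T}}$ is at most $1$, whence $\|x'(T)\|_0^2\le\|x'(T)\|_0\le\sqrt[3]{8\epsilon Ce^{-\kappa T}}$, which is the stated inequality. I do not anticipate a real obstacle: the entire argument is a one-variable real-analysis estimate, and the only genuinely delicate point is the regularity of $s\mapsto\|x'(s)\|_0$ at its zeros. This is exactly why I route the estimate through the $\epsilon$-Lipschitz bound above rather than differentiating $\|x'(s)\|_0^2=\langle x'(s),x'(s)\rangle$; an equivalent route is to note $\tfrac{d}{ds}\|x'\|_0^2=2\langle x'',x'\rangle\ge-2\epsilon\|x'\|_0$ and to track how long $\|x'\|_0^2$ remains above half of its value at $T$, which reproduces the same cubic lower bound and the same constant $8$.
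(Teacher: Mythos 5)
Your proof is correct and essentially identical to the paper's: the same $\epsilon$-Lipschitz control of $s\mapsto\|x'(s)\|_0$ via the reverse triangle inequality, the same interval $[T,T+\tfrac{\mu}{2\epsilon}]$ on which the norm stays above $\mu/2$, and the same cubic lower bound $\mu^3/(8\epsilon)$ compared against the integral hypothesis, with the same constant $8$. Your only divergence is the final patch for the squared left-hand side; note that the paper's own proof likewise concludes the \emph{unsquared} inequality $\|x'(T)\|_0\leq\sqrt[3]{8\epsilon Ce^{-\kappa T}}$ (and the Lemma is invoked in exactly that form in the proof of Theorem \ref{thm:exponential_decay_full_version}), so the exponent $2$ in the statement is evidently a typo, and your extra step---which requires the unstated assumption $8\epsilon Ce^{-\kappa T}\leq 1$---is not needed.
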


\begin{proof}
We set
\beq
\mu:=||x'(T)||_0\;.
\eeq
Then for $t\in[T,T+\frac{\mu}{2\epsilon}]$ we estimate
\bea
||x'(t)||_0&\geq||x'(T)||_0-\int_T^{T+\frac{\mu}{2\epsilon}}||x''(s)||_0ds\\
&\geq\frac{\mu}{2}
\eea
and therefore
\bea
Ce^{-\kappa T}&\geq\int_T^{\infty}||x'(s)||^2_0ds\\
&\geq\int_T^{T+\frac{\mu}{2\epsilon}}||x'(s)||^2_0ds\\
&\geq\frac{\mu^3}{8\epsilon}\;.
\eea
Thus,
\beq
||x'(T)||_0\leq\sqrt[3]{8\epsilon C e^{-\kappa T}}
\eeq
as claimed.
\end{proof}

\begin{Rmk}\label{rmk:sc_0_implies_bounded_operator_norm}
Let $T:\U\oplus\E\to\E$ be $sc^0$ and linear in the second entry. Then for every $x\in U_k$, there exists a neighborhood $V_x\subset U_k$ of $x$ and a constant $c_k>0$ such that 
\beq
||T(y,\cdot)||\leq c_k\qquad\forall y\in V_x
\eeq
where $||T(y,\cdot)||$ denotes the $E_k$-$E_k$-operator norm. That is, the operator norm of $T(y,\cdot)$ is locally bounded. Indeed, since $T$ is $sc^0$ there exists $\delta>0$ and a neighborhood $V_x\subset U_k$ such that
\beq
T(V_x\times B_\delta(0))\subset B_1(0)
\eeq
where $B_r(0)$ is the $r$-ball in $E_k$. This implies that $||T(y,\cdot)||<\frac1\delta$ for all $y\in V_x$. In particular, if $K\subset U$ is compact, then the operator norm $||T(x,\cdot)||$, $x\in K$, is uniformly bounded.
\end{Rmk}

Now we prove Theorem \ref{thm:exponential_decay_full_version}.

\begin{proof}[Proof of Theorem \ref{thm:exponential_decay_full_version}]
We already established the assertion for $x(s)$ for all $j$ in Proposition \ref{prop:exponential_decay}. We proceed by a bootstrapping argument. To start the bootstrapping we recall from Proposition \ref{prop:exponential_decay} that there exists a constant $C>0$ with
\beq
A(x(s))-a^+\leq Ce^{-\kappa s}\qquad\forall s\geq s_0
\eeq
where $a^+=A(x^+)$. We estimate
\bea
\int_T^{\infty}||x'(s)||^2ds&=-\int_T^{\infty}DA(x(s))x'(s)ds\\
&=A(x(T))-a^+\\
&\leq Ce^{-\kappa T}
\eea
and thus by Lemma \ref{lem:integral_bound_implies_pointwise_bound} we conclude
\beq
||x'(T)||\leq C'e^{-\frac{\kappa}{3}T}\;.
\eeq
Deriving the equation $x'=-\nabla A(x)$ gives
\beq
x^{(k)}=\sum_{j=1}^{k-1}\;\;\;\sum_{\substack{l_1\geq l_2\geq\ldots\geq l_j\\ \sum_{r=1}^jl_r=k-1}}\;\;C(l_1,\ldots l_j)\;D^{j}\nabla A(x)\big(x^{(l_1)},x^{(l_2)},\ldots,x^{(l_j)}\big)
\eeq
with some combinatorial constants $C(l_1,\ldots l_j)\geq1$. For $x\in E_{j+1}$ the operator norm of $D^{j}\nabla A(x):E_{j}\times\ldots\times E_{j}\to E_0$ is locally bounded, see Remark \ref{rmk:sc_0_implies_bounded_operator_norm}. Since $x(\R)\cup\{x^\pm\}$ is compact the operator norm of $D^{j}\nabla A(x(s))$ is uniformly bounded in $s\in\R$. Thus, by induction uniform exponential decay for $x'$ implies uniform exponential decay for $x^{(k)}$ and therefore the Theorem.

\end{proof}

\section{An example}\label{sec:example}

An easy example to consider is the symplectic area functional perturbed by a smooth Hamiltonian term. The symplectic area functional is a quadratic functional, see Remark \ref{rmk:automatically_fractal}, and the perturbation is very tame namely an $sc^+$-operator. Thus, it is very easy to see that if critical points of the perturbed symplectic action functional are Morse in the usual sense they are sc-Morse and gradient flow lines decay exponentially fast, see for instance \cite{Salamon_lectures_on_floer_homology}.

Therefore, in this section we study a sc-action functional which is not an $sc^+$-perturbation of a quadratic functional. For this we fix $f(n)=n^2:\N_{\geq1}\to\N_{\geq1}$ and consider the sc-space
$\ell^2\supset\ell^2_f\supset\ell^2_{f^2}\cdots$ with
\beq
\ell^2_f:=\Big\{(a_n)_{n\geq 1}\mid a_n\in\R,\;\sum_{n=1}^\infty f(n)a_n^2<\infty\Big\}\;.
\eeq
We set
\bea
A:\ell^2_f&\to\R\\
(a_n)&\mapsto\sum_{n=1}^\infty \left(na_n^2+ n^3a_n^3\right)
\eea
and claim that $A$ is a sc-action functional. The map $(a_n)\mapsto \sum na_n^2$ is a sc-action functional, see Remark \ref{rmk:automatically_fractal}. Since sc-action functionals form a vector space it remains to check that $(a_n)\mapsto B(a):=\sum_{n=1}^\infty n^3a_n^3$ is a sc-action functional where $a=(a_n)$.

In general it holds 
\beq
||(a_n)||_{\ell^p}\leq||(a_n)||_{\ell^q}
\eeq
for $p\geq q$. Thus, we conclude
\beq
\sum_{n=1}^\infty n^3a_n^3=||(na_n)||_{\ell^3}^3\leq||(na_n)||_{\ell^2}^3=||a_n||_{\ell_f^2}^3<\infty\;,
\eeq
in particular, $B:\ell_f^2\to\R$ is well-defined. Moreover, using the same inequality it follows that 
\beq
B|_{\ell_{{f^k}}^2}:\ell_{f^k}^2\to\R
\eeq
is smooth in the classical sense, i.e.~$B$ is $C^\infty$ on every level. Thus, it follows from \cite[Proposition 2.15]{HWZ_A_General_Fredholm_Theory_I} that $B$ is sc-smooth. The candidate for the gradient of $B$ is
\beq
\nabla B(a)=\sum_{n=1}^\infty 3n^{3}a_n^{2}e_n
\eeq
where $\{e_n\}$ is the standard orthonormal basis of $\ell^2$. It remains to check that $\nabla B$ is sc-smooth. The first derivative is given by
\beq
D\nabla B(a)\hat{a}=\sum_{n=1}^\infty 6n^{3}a_n\hat{a}_ne_n
\eeq
which is a well-defined continuous map $D\nabla B:\ell^2_{f^{k+1}}\times\ell^2_{f^k}\to\ell^2_{f^{k-1}}$. Indeed, if $a\in\ell^2_{f^{k+1}}$ i.e.~$\sum_{n=1}^\infty n^{2k+2}a_n^2<\infty$ and $\hat a\in\ell^2_{f^{k}}$ i.e.~$\sum_{n=1}^\infty n^{2k}\hat a_n^2<\infty$ then
\bea
\sum_{n=1}^\infty n^{2k-2} n^{6}a_n^2\hat{a}_n^2&\leq\sum_{n=1}^\infty n^{2k+2}a_n^2n^{2k}\hat{a}_n^2\\
&\leq\left(\sum_{n=1}^\infty n^{2k+2}a_n^2\right)\left(\sum_{n=1}^\infty n^{2k}\hat a_n^2\right)\\
&<\infty
\eea
Similarly it follows that $D^2\nabla B$ is well-defined and continuous. The higher derivatives vanish. Thus, $B$ is sc-smooth.

The gradient of $A$ is given by
\beq
\nabla A(a)=\sum_{n=1}^\infty \left(2na_ne_n+ 3n^{3}a_n^{2}e_n\right)\;.
\eeq
The critical point equation is 
\beq
2a_n+3n^{2}a_n^{2}=0\;,
\eeq
thus
\beq
a_n=0\quad\text{or}\quad a_n=-\frac{2}{3n^2}\;.
\eeq
Therefore, there are uncountably many solutions. However since we require that critical points are smooth the only critical points are those solutions where only finitely many $a_n$ are non-zero. In fact if infinitely many $a_n$ are non-zero then $(a_n)\not\in\ell^2_{f^{2}}$. In particular, there are only countably many critical points.

\begin{Rmk}
We point out that $0\in\Crit A$ is not an isolated critical point in the $\ell_f^2$ topology. In particular, there cannot be a Morse Lemma on $\ell_f^2$. Nevertheless, $A$ is a Morse sc-action functional since at $a=0$ we have
\beq
\He_A(0)(\hat a_n)=\sum_{n=1}^\infty2n\hat a_ne_n\;.
\eeq 
In general, if $a\in\Crit A$ with $a_i=-\frac{2}{3i^2}$ for $i\in I\subset\N$ and $I$ finite then
\beq
\He_A(a)(\hat a_n)=\sum_{n\not\in I}2n\hat a_ne_n-\sum_{n\in I}2n\hat a_ne_n\;.
\eeq 
Thus, the Morse index of $a$ equals $|I|$ in particular all critical points have finite Morse index but infinite Morse coindex.

Similarly, one can replace the index set $\N$ by $\Z$. Then all critical points have infinite Morse index and coindex. But, as in Floer theory, the index differences are finite.
\end{Rmk}

Since the gradient of $A$ is of diagonal form a gradient flow line $x=(x_n):\R\to\ell^2_\infty$ satisfies
\beq
x_n'(s)=-2nx_n(s)-3n^{3}(x_n(s))^{2}\;.
\eeq
This is a special case of a Bernoulli ODE with solution
\beq
x_n(s)=\frac{1}{c_ne^{2ns}-\frac32n^2}
\eeq
where $c_n$ is the integration constant. As to be expected from our discussion of the Morse indices the only finite energy solution solution converges at $+\infty$ to $0$ and at $-\infty$ to $-\tfrac{2}{3n^2}$. It does so exponentially fast uniformly on all levels and with all derivatives.

\appendix
\section{Interpolation inequalities}\label{sec:app}

The following Proposition is a special case of the interpolation theorem by Stein-Weiss.

\begin{Prop}\label{prop:interpolation_ineq}
Let $\E$ be a fractal sc-space, see Remark \ref{rmk:automatically_fractal}. Then for $0\leq i<j<k$
\beq\label{eqn:general_interpolation_inequality}
||x||_j\leq C_{i,j,k} ||x||_k^{\frac{j-i}{k-i}}||x||_i^{\frac{k-j}{k-i}}\quad\forall x\in E_i\;.
\eeq 
\end{Prop}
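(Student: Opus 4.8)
The claim is an interpolation inequality on a fractal sc-Hilbert space. The plan is to use the explicit description of a fractal sc-space from Remark \ref{rmk:automatically_fractal}: there is a common orthogonal basis $(e_\nu)_\nu$ and an unbounded monotone function $f:\N\to\R_{>0}$ such that $E_m$ is sc-isomorphic to $\ell^2_{f^m}$. Writing $x=\sum_\nu a_\nu e_\nu$, the norm on level $m$ is (up to a constant coming from the sc-isomorphism) given by
\beq
\|x\|_m^2=\sum_\nu f(\nu)^m a_\nu^2\;.
\eeq
The inequality then reduces to a statement about weighted $\ell^2$-sums, which I expect to follow from a single application of Hölder's inequality with a carefully chosen exponent.

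First I would reduce to the model case $E_m=\ell^2_{f^m}$ via the sc-isomorphism, absorbing the distortion into the constant $C_{i,j,k}$. Next, for $0\le i<j<k$ I would set $\theta:=\frac{j-i}{k-i}\in(0,1)$, so that $1-\theta=\frac{k-j}{k-i}$ and the target exponents on the right-hand side are exactly $\theta$ and $1-\theta$. The key algebraic observation is the pointwise identity
\beq
f(\nu)^j=\big(f(\nu)^k\big)^{\theta}\big(f(\nu)^i\big)^{1-\theta}\;,
\eeq
which holds because $j=\theta k+(1-\theta)i$. Plugging this into $\|x\|_j^2=\sum_\nu f(\nu)^j a_\nu^2$ and writing $f(\nu)^j a_\nu^2=\big(f(\nu)^k a_\nu^2\big)^{\theta}\big(f(\nu)^i a_\nu^2\big)^{1-\theta}$, I would apply Hölder's inequality with conjugate exponents $p=\frac1\theta$ and $q=\frac{1}{1-\theta}$ to obtain
\beq
\|x\|_j^2=\sum_\nu\big(f(\nu)^k a_\nu^2\big)^{\theta}\big(f(\nu)^i a_\nu^2\big)^{1-\theta}\le\Big(\sum_\nu f(\nu)^k a_\nu^2\Big)^{\theta}\Big(\sum_\nu f(\nu)^i a_\nu^2\Big)^{1-\theta}=\|x\|_k^{2\theta}\|x\|_i^{2(1-\theta)}\;.
\eeq
Taking square roots yields \eqref{eqn:general_interpolation_inequality} with exponents $\frac{j-i}{k-i}$ and $\frac{k-j}{k-i}$ as claimed, and the membership $x\in E_i$ guarantees the right-hand factors are finite.

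I do not expect a serious obstacle here; the estimate is essentially a one-line Hölder argument once the fractal normal form is in place. The only point requiring care is the passage from the abstract sc-space to the concrete weighted $\ell^2$-model: one must check that the sc-isomorphism from Remark \ref{rmk:automatically_fractal} is bi-Lipschitz on each level (comparing the two Hilbert norms up to uniform constants), so that the resulting multiplicative constant $C_{i,j,k}$ depends only on $i,j,k$ and not on $x$. I would verify this explicitly using the common orthogonal basis $(e_\nu)_\nu\in E_\infty$, after which the weighted-$\ell^2$ computation above completes the proof.
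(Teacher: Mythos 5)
Your argument is correct, and it takes a genuinely different route from the paper's elementary proof. The paper offers two proofs: one by citing the Stein--Weiss interpolation theorem (Theorems 1.18.5 and 1.3.3(g) in Triebel), and an ``elementary'' one that first establishes the base case $\|x\|_1\leq\|x\|_2^{1/2}\|x\|_0^{1/2}$ by expanding $\|x\|_1^4$ and using $2f(i)f(j)\leq f(i)^2+f(j)^2$, then runs an induction on $k$ for $i=0$, repeatedly exploiting the shift isometry $\ell^2_{f^{m+1}}\to\ell^2_{f^m}$, $\{x_\nu\}\mapsto\{f(\nu)^{1/2}x_\nu\}$, and finally shifts indices to handle general $i$. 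Your single application of H\"older's inequality with exponents $1/\theta$ and $1/(1-\theta)$, based on the pointwise identity $f(\nu)^j=(f(\nu)^k)^\theta(f(\nu)^i)^{1-\theta}$, collapses all three steps into one line, recovers the same constant $C_{i,j,k}=1$ in the model, and in fact works for arbitrary real levels $i<j<k$ rather than just integers; it is arguably the cleaner argument, and the reduction to the weighted-$\ell^2$ model via the fractal normal form is exactly the same opening move the paper makes. One tiny quibble: membership $x\in E_i$ only guarantees that $\|x\|_i$ is finite, not $\|x\|_k$; when $\|x\|_k=\infty$ the inequality holds vacuously, so nothing breaks, but the sentence claiming finiteness of both right-hand factors should be adjusted.
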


\begin{proof}
By definition of being fractal we may assume without loss of generality that
\beq
\E=(\ell^2\supset\ell^2_f\subset\ell^2_{f^2}\supset\cdots)
\eeq
for some unbounded monotone increasing function $f:\N\to\R_{>0}$. Here $\ell_f^2$ is the space
\beq
\Big\{(a_n)\in\ell^2\mid \sum_n f(n)a_n^2<\infty\Big\}\;.
\eeq
Then the statement follows by combining Theorem 1.18.5 and Theorem 1.3.3(g) in \cite{Triebel_Interpolation_theory_function_spaces_differential_operators}. In the notation of the book choose $A=\R$, $X=\N$, and $\mu$ the counting measure, i.e.~every point has measure one. Then we can apply Theorem 1.18.5 with $p=p_0=p_1=2$, $w_0^2:=f^k$, $w_1^2:=f^i$, and $\theta:=\frac{k-j}{k-i}$. After that Theorem 1.3.3(g) relates the norm on the space $A_0=L_{2,w_0^2(x)}(\R)=\ell^2_{f^k}$ and $A_1=\ell^2_{f^i}$.
\end{proof}

We now present an elementary proof of Proposition \ref{prop:interpolation_ineq} which relies on the fractal property of the sc-space $\E$. Namely we note that there exists a natural sc-isomorphism (even a sc-isometry)
\beq
\E\cong\E^m
\eeq
 for all $m\geq0$. The proof shows that the constants $C_{i,j,k}=1$.
\begin{proof}[An elementary proof of Proposition \ref{prop:interpolation_ineq}] We proceed in three steps.
\\ \\
\textbf{Step\,1: } \emph{$||x||_1 \leq ||x||_2^{1/2}||x||_0^{1/2}$ for every $x \in \ell^2_{f^2}$.}
\\ \\
We equivalently show
$$||x||_1^4 \leq ||x||_2^2||x||_0^2, \quad \forall\,\,x \in \ell^2_{f^2}.$$
To see that we estimate
\begin{eqnarray*}
||x||_1^4&=&\Big(\sum_{i=1}^\infty f(i)x_i^2\Big)^2\\
&=&\sum_{i=1}^\infty f(i)^2x_i^4+2\sum_{i < j} f(i)f(j)x_i^2x_j^2\\
&\leq&\sum_{i=1}^\infty f(i)^2x_i^4+\sum_{i < j}\big( f(i)^2+f(j)^2\big)x_i^2x_j^2\\
&=&\Big(\sum_{i=1}^\infty f(i)^2 x_i^2\Big)\Big(\sum_{j=1}^\infty x_j^2\Big)\\
&=&||x||_2^2 ||x||_0^2.
\end{eqnarray*}
This finishes the proof of Step\,1.
\\ \\
\textbf{Step\,2: } \emph{We prove the theorem if $i=0$.}
\\ \\
The proof is by a induction on $k$. For $k \geq 2$ denote by $(A)_k$ the assertion for
every $j \in \{1, \ldots, k-1\}$. The assertion $(A)_2$ is already true in view of Step\,1. Hence it suffices to explain the induction step. For that purpose suppose that $(A)_{k-1}$
is true. By induction hypothesis we have
\begin{equation}\label{ind1}
||x||_1 \leq ||x||_{k-1}^{\frac{1}{k-1}}||x||_0^{\frac{k-2}{k-1}}
\end{equation}
as well as
\begin{equation}\label{ind2}
||x||_{k-2} \leq ||x||_{k-1}^{\frac{k-2}{k-1}}||x||_0^{\frac{1}{k-1}}.
\end{equation}
Now observe that $\ell^2_f$ is isometric to
$\ell^2$ via the map $\{x_\nu\} \mapsto \{f(\nu)^{1/2}x_\nu\}$. This maps restricts for every $m \in \mathbb{N}$ to an isometry $\ell^2_{f^{m+1}} \to \ell^2_{f^m}$. Therefore we obtain from
(\ref{ind2}) by shifting the inequality
\begin{equation}\label{ind3}
||x||_{k-1}\leq||x||_k^{\frac{k-2}{k-1}}||x||_1^{\frac{1}{k-1}}.
\end{equation}
Using (\ref{ind1}) and (\ref{ind3}) we estimate
\begin{eqnarray*}
||x||_1 &\leq& ||x||_{k-1}^{\frac{1}{k-1}}||x||_0^{\frac{k-2}{k-1}}\\
&\leq&||x||_k^{\frac{k-2}{(k-1)^2}}||x||_1^{\frac{1}{(k-1)^2}}||x||_0^{\frac{k-2}{k-1}}
\end{eqnarray*}
Dividing both sides of this inequality by $||x||_1^{\frac{1}{(k-1)^2}}$ we obtain
$$||x||_1^{\frac{k(k-2)}{(k-1)^2}}\leq||x||_k^{\frac{k-2}{(k-1)^2}}||x||_0^{\frac{k-2}{k-1}}$$
or equivalently
\begin{equation}\label{ind4}
||x||_1 \leq ||x||_k^{\frac{1}{k}}||x||_0^{\frac{k-1}{k}}.
\end{equation}
This finishes the proof of $(A)_k$ in the case $j=1$. Now suppose that $j \geq 2$. In view of the induction hypothesis we have
$$||x||_{j-1} \leq ||x||_{k-1}^{\frac{j-1}{k-1}}||x||_0^{\frac{k-j}{k-1}}.$$
Shifting this inequality again we obtain
\begin{equation}\label{ind5}
||x||_j \leq ||x||_k^{\frac{j-1}{k-1}}||x||_1^{\frac{k-j}{k-1}}.
\end{equation}
Combining this inequality with (\ref{ind4}) we obtain
\begin{eqnarray*}
||x||_j &\leq& ||x||_k^{\frac{j-1}{k-1}}||x||_1^{\frac{k-j}{k-1}}\\
&\leq& ||x||_k^{\frac{j-1}{k-1}}||x||_k^{\frac{k-j}{(k-1)k}}||x||_0^{\frac{k-j}{k}}\\
&=&||x||_k^{\frac{j}{k}}||x||_0^{\frac{k-j}{k}}.
\end{eqnarray*}
This finishes the proof of the induction step and hence of Step\,2.
\\ \\
\textbf{Step\,3: } \emph{We prove the theorem.}
\\ \\
The theorem follows immediately from Step\,2 by shifting the indices in the inequality.
\end{proof}

The following example was used in the proof of the action-energy inequality.

\begin{Ex}
For $i=1$, $j=4$, and $k=10$ we obtain
\beq
||x||_4\leq C' ||x||_{10}^{\frac{4-1}{10-1}}||x||_1^{\frac{10-4}{10-1}}=C' ||x||_{10}^{\frac13}||x||_1^{\frac23}\quad\forall x\in E_{10}
\eeq 
thus
\beq\label{eqn:interpolation}
||x||_4^3\leq C||x||_{10}||x||_{1}^2\quad\forall x\in E_{10}\;.
\eeq
\end{Ex}

\section{Sobolev vs.~interpolation}\label{app:sobelev_vs_interpolation}

Uniform exponential decay estimates for Morse theory can be found in \cite{Schwarz_Morse_homology}, for Floer cylinders in \cite{Salamon_lectures_on_floer_homology}, and for holomorphic strips in \cite{Robbin_Salamon_Asymptotic_behaviour_of_holomorphic_strips}. These proofs crucially use Sobolev inequalities. In this paper we do not use Sobolev inequalities but interpolation inequalities instead, see Appendix \ref{sec:app}. They have the advantage that the dimension of the underlying spaces do not enter and therefore give rise to a unified approach to exponential decay estimates. 

In \cite{Frauenfelder_First_steps_in_the_geography_of_scale_Hilbert_structures} it is proved that the sc-isomorphism type of the sc-space
\beq
\ell^{f}=(\ell^2\supset\ell^2_f\subset\ell^2_{f^2}\supset\cdots)
\eeq
is uniquely  described by the growth type of $f:\N\to\R_+$. That is, 
\beq
\ell^{f}\cong\ell^g
\eeq
if and only if there exists a constant $c>0$ such that
\beq
\frac{1}{c}f\leq g\leq cf\;.
\eeq
Let $N$ be a closed manifold then Kang proved in \cite[Theorem A]{Kang_Local_invariant_for_scale_structures_on_mapping_spaces} that the sc-space
\beq
L^2(N,\R^d)\supset W^{1,2}(N,\R^d)\supset W^{2,2}(N,\R^d)\supset \ldots
\eeq
is sc-isomorphic to $\ell^f$ with 
\beq
f(n)=n^{\frac{2}{\dim N}}\;.
\eeq
This reminds of the Sobolev inequalities which depend on the dimension of the source space. The advantage of the interpolation inequalities used in this article is that they are independent of the growth type of $f$. In particular, in the case of mapping spaces they are independent of the dimension of the source space. Thus, they lead to a unified approach to exponential decay estimates.

%
%
%
%
\bibliographystyle{amsalpha}
\bibliography{../../../../Bibtex/bibtex_paper_list}

\providecommand{\bysame}{\leavevmode\hbox to3em{\hrulefill}\thinspace}
\providecommand{\MR}{\relax\ifhmode\unskip\space\fi MR }
\providecommand{\MRhref}[2]{%
  \href{http://www.ams.org/mathscinet-getitem?mr=#1}{#2}
}
\providecommand{\href}[2]{#2}
\begin{thebibliography}{HWZ09b}

\bibitem[AM05]{Abbo_Majer_A_Morse_complex_for_infinite_dimensional_mfds_I}
A.~Abbondandolo and P.~Majer, \emph{A {M}orse complex for infinite dimensional
  manifolds {I}}, Adv. Math. \textbf{197} (2005), no.~2, 321--410.

\bibitem[BL76]{Bergh_Lofstrom_Interpolation_spaces}
J.~Bergh and J.~L{\"o}fstr{\"o}m, \emph{Interpolation spaces. {A}n
  introduction}, Springer-Verlag, Berlin, 1976, Grundlehren der Mathematischen
  Wissenschaften, No. 223.

\bibitem[CZ83]{Conley_Zehnder_The_Birkhoff_Lewis_fixed_point_theorem_and_a_conjecture_of_Arnolf}
C.~C. Conley and E.~Zehnder, \emph{The {B}irkhoff-{L}ewis fixed point theorem
  and a conjecture of {V}. {I}. {A}rnold}, Invent. Math. \textbf{73} (1983),
  no.~1, 33--49.

\bibitem[Flo88]{Floer_The_unregularized_gradient_flow_of_the_symplectic_action}
A.~Floer, \emph{The unregularized gradient flow of the symplectic action},
  Comm. Pure Appl. Math. \textbf{41} (1988), no.~6, 775--813.

\bibitem[Fra09a]{Frauenfelder_First_steps_in_the_geography_of_scale_Hilbert_structures}
U.~Frauenfelder, \emph{{First steps in the geography of scale Hilbert
  structures}}, 2009, arXiv:0910.3980.

\bibitem[Fra09b]{Frauenfelder_Fractal_scale_Hilbert_spaces_and_scale_Hessian_operators}
\bysame, \emph{{Fractal scale Hilbert spaces and scale Hessian operators}},
  2009, arXiv:0912.1154.

\bibitem[GS05]{Gaio_Salamon_GW_invariants_of_symplectic_quotients_and_adiabatic_limits}
A.~R. Gaio and D.~A. Salamon, \emph{Gromov-{W}itten invariants of symplectic
  quotients and adiabatic limits}, J. Symplectic Geom. \textbf{3} (2005),
  no.~1, 55--159.

\bibitem[HWZ07]{HWZ_A_General_Fredholm_Theory_I}
H.~Hofer, K.~Wysocki, and E.~Zehnder, \emph{{A general Fredholm theory. I. : A
  splicing-based differential geometry}}, J. Eur. Math. Soc. (JEMS) \textbf{9}
  (2007), no.~4, 841--876.

\bibitem[HWZ09a]{HWZ_A_General_Fredholm_Theory_II}
\bysame, \emph{A general {F}redholm theory. {II}. {I}mplicit function
  theorems}, Geom. Funct. Anal. \textbf{19} (2009), no.~1, 206--293.

\bibitem[HWZ09b]{HWZ_A_General_Fredholm_Theory_III}
\bysame, \emph{A general {F}redholm theory. {III}. {F}redholm functors and
  polyfolds}, Geom. Topol. \textbf{13} (2009), no.~4, 2279--2387.

\bibitem[Kan11]{Kang_Local_invariant_for_scale_structures_on_mapping_spaces}
J.~Kang, \emph{{Local invariant for scale structures on mapping spaces}}, 2011,
  arXiv:1108.0327.

\bibitem[RS01]{Robbin_Salamon_Asymptotic_behaviour_of_holomorphic_strips}
J.~Robbin and D.~A. Salamon, \emph{Asymptotic behaviour of holomorphic strips},
  Ann. Inst. H. Poincar\'e Anal. Non Lin\'eaire \textbf{18} (2001), no.~5,
  573--612.

\bibitem[Sal99]{Salamon_lectures_on_floer_homology}
D.~A. Salamon, \emph{Lectures on {F}loer homology}, Symplectic geometry and
  topology (Park City, UT, 1997), IAS/Park City Math. Ser., vol.~7, Amer. Math.
  Soc., Providence, RI, 1999, pp.~143--229.

\bibitem[Sch93]{Schwarz_Morse_homology}
M.~Schwarz, \emph{Morse homology}, Progress in Mathematics, vol. 111,
  Birkh\"auser Verlag, Basel, 1993.

\bibitem[Tri78]{Triebel_Interpolation_theory_function_spaces_differential_operators}
H.~Triebel, \emph{Interpolation theory, function spaces, differential
  operators}, North-Holland Mathematical Library, vol.~18, North-Holland
  Publishing Co., Amsterdam, 1978.

\bibitem[Zil09]{Ziltener_The_invariant_symplectic_action_and_decay_for_vorticies}
F.~Ziltener, \emph{The invariant symplectic action and decay for vortices}, J.
  Symplectic Geom. \textbf{7} (2009), no.~3, 357--376.

\end{thebibliography}
\end{document}